\newtheorem{theorem}{Theorem}
\newtheorem{lemma}{Lemma}
\newtheorem{remark}{Remark}
\journal{}
\begin{document}

\begin{frontmatter}



\title{A Strehl Version of Fourth Franel Sequence}


\author[label1]{Belbachir Hacène}
\address[label1]{USTHB; Faculty of Mathematics, RECITS Laboratory, Po. Box 32, El Alia, Bab-Ezzouar 16111, Algiers, Algeria}
\ead{hbelbachir@usthb.dz}

\author[label1]{Otmani Yassine}
\ead{yotmani@usthb.dz}

\begin{abstract}
We give a combinatorial identity related to the Franel numbers involving the sum of fourth power of binomial coefficients. Furthermore, investigating in J. Miki\'c's proof of the first Strehl Identity, we provide a combinatorial proof of this identity using the double counting argument.
\end{abstract}



\begin{keyword}
Franel number\sep Strehl Identity\sep double counting argument.


\MSC[2010] 11B65\sep 05A19\sep 05A10.

\end{keyword}

\end{frontmatter}


\section{Introduction}
It is well know that
\begin{equation}
\sum_{k=0}^{n}\binom{n}{k}=2^n.
\end{equation}
Also, the sums of power two of binomial coefficients is given by Vandermonde Identity
\begin{equation}
\beta_{n}:=\sum_{k=0}^{n}\binom{n}{k}^2=\binom{2n}{n},
\end{equation}
it is established that the sequence $\left( \beta_{n}\right) _{n\in\mathbb{N}}$ satisfy the following recurrence relation (see \cite[Corollary 3.1]{belbachir2})

\begin{equation}
n\beta_{n}=2\left( 2n-1\right) \beta_{n-1}\text{ where }\beta_{0}=1.
\end{equation}
In 1894, Franel, \cite{franel1}, introduced the sequences $\left( f_{n}\right) _{n\in\mathbb{N}}$ as
\begin{equation}\label{1.1}
f_{n}:=\sum_{k=0}^{n}\binom{n}{k}^3,
\end{equation}
where he provided that $\left( f_{n}\right) _{n\in\mathbb{N}}$ verified the following recurrence
 relation
\begin{align}
\left( n+1\right)^{2} f_{n+1}=\left(7n^2+7n  +2\right)f_{n}+8n^{2}f_{n-1} \text{ where }f_{0}=1 \text{, }f_{1}=2.
\end{align}
 Later, in 1895, Franel, \cite{franel2}, shows that the sequence $$\varphi_{n}:=\sum_{k=0}^{n}\binom{n}{k}^{4},$$ satisfies a three-term recurrence formula as follow
\begin{equation}\label{1.3}
\left( n+1\right)^{3} \varphi_{n+1}=2\left(2n+1 \right) \left(3n^2+3n+1\right)\varphi_{n}+4n\left(4n-1 \right) \left(4n+1 \right) \varphi_{n-1},
\end{equation}
where $\varphi_{0}=1$, $\varphi_{1}=2$ (for more details about the recurrence of sums that involving power of binomial coefficient one can see \cite{cus}). In 1905, MacMahon, \cite{macmahon}, investigated the master Theorem where he found the following identity 
\begin{equation}\label{1.4}
\sum_{k=0}^{n}\binom{n}{k}^{3}x^ky^{n-k}=\sum_{k=0}^{\lfloor n/2\rfloor}\binom{n}{k}\binom{n-k}{k}\binom{n+k}{k}\left( xy\right)^k\left( x+y\right) ^{n-2k} .
\end{equation}
Recently, in 1993, through applying the Chu-Vandermonde convolution \cite{belbachir}, Strehl, \cite{strehl},
obtained the following identity 
\begin{equation}\label{1.5}
f_{n}=\sum_{k=0}^{n}\binom{n}{k}^2\binom{2k}{n},
\end{equation} 
by simplifying $\binom{2k}{n}\binom{n}{k}$ on Strehl Identity, we get 
\begin{equation}\label{1.6}
f_{n}=\sum_{k=0}^{\lfloor n/2\rfloor}\binom{n}{2k}\binom{2k}{k}\binom{2n-2k}{n-k},
\end{equation}
in which Gould, \cite{gould}, derived it using Carlitz formula \cite{carlitz}. Furthermore, Zhao, \cite{zhao}, presents a combinatorial proof of the equivalence between Formula~\eqref{1.4} (for $x=y=1$) and Identity~\eqref{1.6}, employing free 2-Motzkin paths. Also, J. Miki\'c et al. \cite{miki}, established a combinatorial proof of Identity~\eqref{1.5}, using double counting argument.\\
The purpose of this paper is to prove a new extension of Franel number $\varphi_{n}$. Furthermore, we develop the J. Miki\'c's argument, \cite{miki}, of the first Strehl Identity to prove Identity~\eqref{1.7}, below combinatorially.\\
The paper is structured as follow; in Section 2, we present our result, then in Section 3, we provide the combinatorial proof of Identity~\eqref{1.7}, bellow using double counting argument.

\section{Main Theorem} 
Let us star by following lemma.
\begin{lemma}[\cite{belbachir,gould2}]\rm Let $n,m$ be positive integers and $x$ be complex, we have
\begin{equation}\label{7}
\binom{x}{n}\binom{n}{m}=\binom{x}{m}\binom{x-m}{n-m},
\end{equation}
\begin{equation}\label{8}
\sum_{k=0}^{n}\binom{x}{k}\binom{m}{n-k}=\binom{x+m}{n},
\end{equation}
\begin{equation}\label{9}
\sum_{k=0}^{n}\binom{n}{i}\binom{m}{n-k}\binom{x+n-k}{n+m}=\binom{x}{n}\binom{x}{m}.
\end{equation}
\end{lemma}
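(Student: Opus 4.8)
The plan is to treat identities~\eqref{7} and~\eqref{8} as the two elementary workhorses and to reduce~\eqref{9} to them. Identity~\eqref{7} (trinomial revision) I would first prove for a nonnegative integer $x$ by cancelling factorials: both $\binom{x}{n}\binom{n}{m}$ and $\binom{x}{m}\binom{x-m}{n-m}$ equal $\frac{x!}{m!\,(n-m)!\,(x-n)!}$. Identity~\eqref{8} (the Chu--Vandermonde convolution) I would prove for nonnegative integer $x$ by the standard two-colour count of the $n$-subsets of an $(x+m)$-set split into blocks of sizes $x$ and $m$. Since in each identity both sides are polynomials in $x$ of fixed degree that agree at the infinitely many nonnegative integers, the polynomial identity principle upgrades both to all complex $x$, which is the generality we need.

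For~\eqref{9} I read the misprinted $\binom{n}{i}$ as $\binom{n}{k}$ and first reindex by $j=n-k$, turning the left-hand side into the symmetric sum $\sum_{j=0}^{n}\binom{n}{j}\binom{m}{j}\binom{x+j}{n+m}$. Next I would expand the last factor by~\eqref{8}, splitting $x+j$ into the blocks $x$ and $j$, namely $\binom{x+j}{n+m}=\sum_{i}\binom{x}{n+m-i}\binom{j}{i}$, and interchange the two finite sums so that the inner sum runs over $j$ with $i$ fixed.

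The finish alternates the two identities. In the inner $j$-sum I apply~\eqref{7} as $\binom{m}{j}\binom{j}{i}=\binom{m}{i}\binom{m-i}{j-i}$, pull out $\binom{m}{i}$, and collapse $\sum_{j}\binom{n}{j}\binom{m-i}{j-i}$ by~\eqref{8} to $\binom{n+m-i}{m}$; the whole sum becomes $\sum_{i}\binom{m}{i}\binom{x}{n+m-i}\binom{n+m-i}{m}$. A second use of~\eqref{7}, now as $\binom{x}{n+m-i}\binom{n+m-i}{m}=\binom{x}{m}\binom{x-m}{n-i}$, lets me factor out $\binom{x}{m}$, and a final application of~\eqref{8} collapses $\sum_{i}\binom{m}{i}\binom{x-m}{n-i}$ to $\binom{x}{n}$, yielding $\binom{x}{n}\binom{x}{m}$ as claimed.

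I expect the main obstacle to be the bookkeeping in~\eqref{9}: getting the index shifts (such as $j=i+t$ in the inner collapse) and the summation ranges right while interchanging the two sums, and making sure each invocation of~\eqref{7} and~\eqref{8} matches its stated pattern. Because every manipulation is a finite sum and the symbols $\binom{x}{\cdot}$ and $\binom{x-m}{\cdot}$ are polynomials in $x$, the derivation is valid verbatim for complex $x$ once~\eqref{7} and~\eqref{8} are known in that generality, so no separate limiting argument is needed.
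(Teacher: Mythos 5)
Your proposal is correct, but note that the paper itself gives no proof of this lemma at all: it is stated with a bare citation to Belbachir and to Gould's tables (identity~\eqref{9} being Riordan's identity, Gould (6.45)), so there is nothing in the paper to compare your argument against step by step. What you supply is a self-contained derivation, and it checks out. You rightly read the $\binom{n}{i}$ in~\eqref{9} as a misprint for $\binom{n}{k}$ --- this is forced by how the identity is invoked in the proof of the main theorem, where the inner sum has the form $\sum_{j}\binom{n-i}{j}\binom{i}{n-i-j}\binom{2n-i-j}{n}$ and must collapse to $\binom{n}{n-i}\binom{n}{i}$. Your chain for~\eqref{9} is sound: after the reindexing $j=n-k$ the sum is $\sum_{j}\binom{n}{j}\binom{m}{j}\binom{x+j}{n+m}$; expanding $\binom{x+j}{n+m}=\sum_{i}\binom{x}{n+m-i}\binom{j}{i}$, using $\binom{m}{j}\binom{j}{i}=\binom{m}{i}\binom{m-i}{j-i}$, collapsing the $j$-sum to $\binom{n+m-i}{m}$, rewriting $\binom{x}{n+m-i}\binom{n+m-i}{m}=\binom{x}{m}\binom{x-m}{n-i}$, and collapsing the $i$-sum to $\binom{x}{n}$ all match the stated patterns of~\eqref{7} and~\eqref{8}, with the out-of-range terms vanishing automatically. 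The polynomial-identity upgrade from nonnegative integer $x$ to complex $x$ is also handled correctly, since both sides of each identity are polynomials in $x$ of bounded degree agreeing at infinitely many points. The only thing I would flag is cosmetic: when you collapse $\sum_{j}\binom{n}{j}\binom{m-i}{j-i}$ you should make explicit the substitution $t=j-i$ and the symmetry $\binom{n}{i+t}=\binom{n}{n-i-t}$ before invoking~\eqref{8}, as you yourself anticipate in your closing paragraph.
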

\begin{remark}\rm
The Identity~\eqref{9}, is exactly the Riordan Identity see \cite[(6.45)]{gould2}. 
\end{remark}
\begin{theorem}\rm
Let $n$ be positive integer, the following identity holds true
\begin{equation}\label{1.7}
\varphi_{n}=\sum_{k=0}^{n}\binom{n}{k}^2\binom{2k}{n}\binom{2n-k}{n}.
\end{equation}
\end{theorem}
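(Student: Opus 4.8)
The plan is to bring the right-hand side of \eqref{1.7} into a shape that sits exactly \emph{one binomial factor above} the known Gould--MacMahon decomposition \eqref{1.6} of the third Franel number, and only then to attack the resulting identity by a double-counting argument in the style of J.\ Miki\'c.

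First I would clear the factor $\binom{2k}{n}$ against one copy of $\binom nk$ through the absorption identity \eqref{7}, which gives $\binom nk\binom{2k}{n}=\binom{2k}{k}\binom{k}{n-k}$; note that the factor $\binom{k}{n-k}$ automatically restricts the range to $k\ge n/2$. After the substitution $j=n-k$ and the elementary merge $\binom nj\binom{n-j}{j}=\binom{n}{2j}\binom{2j}{j}$, I expect the right-hand side of \eqref{1.7} to collapse to
\begin{equation}
\sum_{k=0}^{n}\binom nk^2\binom{2k}{n}\binom{2n-k}{n}=\sum_{j\ge 0}\binom{n}{2j}\binom{2j}{j}\binom{2n-2j}{n-j}\binom{n+j}{j}.
\end{equation}
The first three factors are precisely the summand of $f_n$ in \eqref{1.6}, so this reads as saying that $\varphi_n$ is obtained from the Gould--MacMahon expansion of $f_n$ by inserting the single extra weight $\binom{n+j}{j}$ into each term. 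I would confirm this reduction numerically for $n=1,2,3$ before relying on it.

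With the right-hand side in this form, the combinatorial core is to prove $\sum_j\binom{n}{2j}\binom{2j}{j}\binom{2n-2j}{n-j}\binom{n+j}{j}=\sum_k\binom nk^4$. Following Miki\'c's strategy I would assemble a single finite family $\mathcal S$ of configurations on $[n]$ and count it two ways. Counting by the common size $k$ of a quadruple $(A_1,A_2,A_3,A_4)$ of $k$-subsets gives $\sum_k\binom nk^4=\varphi_n$ at once; the real work is to re-count $\mathcal S$ by a secondary parameter $j$ (an overlap or symmetric-difference statistic of the quadruple) so that the four reduced factors appear as independent choices: $\binom{n}{2j}$ and $\binom{2j}{j}$ select and balance a distinguished $2j$-set, $\binom{2n-2j}{n-j}=\binom{2(n-j)}{n-j}$ balances the remaining coordinates, and $\binom{n+j}{j}$ records the extra degree of freedom that upgrades the count from the cubes behind $f_n$ to the fourth powers of $\varphi_n$.

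The main obstacle I anticipate is exactly this second count: isolating a statistic $j$ on quadruples of equal-size subsets whose fibers factor as the displayed product, and in particular giving an honest combinatorial meaning to the asymmetric factor $\binom{2n-k}{n}$ (equivalently $\binom{n+j}{j}$) rather than merely checking the collapse by algebra. As a safer fallback, should the bijection resist, I would attempt a purely algebraic proof of the reduced identity: expand $\binom{2n-k}{n}$ by Vandermonde \eqref{8}, interchange the order of summation, and try to contract the inner sum using the Riordan identity \eqref{9}, using the right-hand side above as the explicit target to aim for.
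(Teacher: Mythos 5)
Your opening reduction is correct: absorbing $\binom{n}{k}\binom{2k}{n}=\binom{2k}{k}\binom{k}{n-k}$ via \eqref{7}, substituting $j=n-k$ and merging $\binom{n}{j}\binom{n-j}{j}=\binom{n}{2j}\binom{2j}{j}$ does turn the right-hand side of \eqref{1.7} into $\sum_{j}\binom{n}{2j}\binom{2j}{j}\binom{2n-2j}{n-j}\binom{n+j}{j}$, and the small cases check out. But that is where the proposal stops being a proof. The whole burden of the theorem is now the identity $\sum_{j}\binom{n}{2j}\binom{2j}{j}\binom{2n-2j}{n-j}\binom{n+j}{j}=\sum_{k}\binom{n}{k}^4$, and for this you offer only a description of what a Miki\'c-style double count would have to accomplish; you yourself flag the decisive step --- producing a statistic $j$ on quadruples of equal-size subsets whose fibers factor as the displayed product, and in particular giving a combinatorial meaning to $\binom{n+j}{j}$ --- as an unresolved obstacle. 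A plan whose central construction is admitted to be missing is a gap, not a proof. The algebraic fallback is likewise only gestured at, and as aimed it targets the wrong factor: expanding $\binom{2n-k}{n}$ by Vandermonde produces a factor $\binom{n-k}{n-i}$ that does not absorb cleanly into $\binom{n}{k}^2\binom{2k}{n}$, so it is far from clear that Riordan's identity \eqref{9} ever becomes applicable along that route.

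For comparison, the paper needs no preliminary reduction at all. It expands the \emph{other} factor, $\binom{2k}{n}=\sum_i\binom{k}{i}\binom{k}{n-i}$ by \eqref{8}; each of $\binom{k}{i}$ and $\binom{k}{n-i}$ then absorbs into one of the two copies of $\binom{n}{k}$ via \eqref{7}, giving $\binom{n}{i}\binom{n-i}{k-i}$ and $\binom{n}{n-i}\binom{i}{n-k}$; after swapping the order of summation and setting $j=k-i$, the inner sum is exactly Riordan's identity \eqref{9} with $x=n$ and $m=i$ (in the variable $n-i$), and collapses to $\binom{n}{n-i}\binom{n}{i}$, yielding $\sum_i\binom{n}{i}^4$. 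If you want to salvage your fallback, run it on the original form of \eqref{1.7} with the Vandermonde expansion applied to $\binom{2k}{n}$ rather than to $\binom{2n-k}{n}$. The paper's separate combinatorial proof also sidesteps your quadruple set-up entirely: it double counts triples $(A,B,C)$ with $A,B\subset[2n]$ and $C\subset[3n]\setminus[n]$, all of size $n$, subject to $A\subset B\Delta[n]$ and $B\setminus[n]\subset C$; one count produces the four factors of \eqref{1.7} directly (using $\binom{2n-k}{n-k}=\binom{2n-k}{n}$), the other gives $\binom{n}{k}^4$, so the reduced $j$-form and its troublesome $\binom{n+j}{j}$ never need to be interpreted.
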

\begin{proof}\rm
Let $s$ be left hand side of~\eqref{1.7}.  From~\eqref{8}, we have
\begin{align*}
\binom{2k}{n}=\sum_{i=0}^{n}\binom{k}{i}\binom{k}{n-i},
\end{align*}
then
\begin{align*}
s&=\sum_{k=0}^{n}\binom {n}{k}^2\binom{2n-k}{n}\sum_{i=0}^{n}\binom{k}{i}\binom{k}{n-i},\\
&=\sum_{i=0}^{n}\binom{n}{i}\binom{n}{n-i}\sum_{k=i}^{n}\binom {n-i}{k-i}\binom{i}{k+i-n}\binom{2n-k}{n},\\
&=\sum_{i=0}^{n}\binom{n}{i}\binom{n}{n-i}\sum_{k=i}^{n}\binom {n-i}{k-i}\binom{i}{n-k}\binom{2n-k}{n},
\end{align*}
set $k-i=j$, we get
\begin{align*}
s=\sum_{i=0}^{n}\binom{n}{i}\binom{n}{n-i}\sum_{j=0}^{n-i}\binom {n-i}{j}\binom{i}{n-i-j}\binom{2n-i-j}{n},
\end{align*}
finally, we apply the Riordan Identity~\eqref{9}, where get to result.
\end{proof}
\begin{remark}\rm
Follow the same steps of precedent proof we get this general form for all complex $x$
\begin{equation}
\sum_{k=0}^{n}\binom{n}{k}^2\binom{x}{k}\binom{x}{n-k}=\sum_{k=0}^{n}\binom{x}{n-k}\binom{x+k}{k}\binom{2\left( n-k\right) }{n}\binom{n}{k}.
\end{equation}
As a consequence we get for $x=1/2$ and $x=-1/2$, the following identities
\begin{align}
&\sum_{k=0}^{n}\binom{n}{k}^2\binom{2k}{k}\binom{2\left( n-k\right) }{n-k}\frac{1}{\left( 2k-1\right) \left( 2\left( n-k\right)-1\right)}\\&=\sum_{k=0}^{n}\binom{2k}{k}\binom{2\left( n-k\right) }{n-k}\binom{2\left( n-k\right) }{n}\binom{n}{k}\frac{\left( 2k+1\right)\left( -1\right) ^{n-k+1} }{2\left( n-k\right)-1}\nonumber,
\end{align}
\begin{align}\label{13}
\sum_{k=0}^{n}\binom{n}{k}^2\binom{2k}{k}\binom{2\left( n-k\right) }{n-k}=\sum_{k=0}^{n}\binom{2k}{k}\binom{2\left( n-k\right) }{n-k}\binom{2\left( n-k\right) }{n}\binom{n}{k}\left( -1\right) ^{k},
\end{align}
where~\eqref{13}, is Domb numbers \cite{domb}. Furthermore, set $x=-n-1$, we get
\begin{equation}
\sum_{k=0}^{n}\binom{n}{k}^2\binom{n+k}{k}\binom{2n-k}{n}=\sum_{k=0}^{n}\binom{n}{k}^2\binom{2n-k}{n}\binom{2\left( n-k\right) }{n}.
\end{equation}

\end{remark}
\section{Combinatorial Proof}
Before staring the combinatorial proof of Identity~\eqref{1.7}, we need to remember some fundamental concepts.
\begin{itemize}
\item Let $n$ be non negative integer, $\left[n \right]$ denotes the set $\left\lbrace1,2\ldots,n \right\rbrace $, $\left[0\right]$ denotes the empty set $\emptyset$.
\item Let $A$ be finite set, we note by $\vert A\vert$ the cardinal of $A$.
\end{itemize}
Now we are already to start our combinatorial proof in two steps. Let $n\in\mathbb{N}$ and $Y$ be the set define as follow
\begin{align*}
Y:=\left\lbrace \left(A,B,C\right)\vert A,B\subset\left[ 2n\right] ,C\subset\left[ 3n\right]\setminus\left[ n\right],\vert A\vert=\vert B\vert=\vert C\vert=n,A\subset B\Delta\left[ n\right],\right.\\ \left.B\setminus\left[ n\right]\subset C  \right\rbrace .
\end{align*} 
First step. Suppose $k=\vert B\setminus\left[ n\right] \vert$, it is clear that $0\leq k\leq n$, so we can choose those elements from the set $ \left[2n\right]\setminus\left[ n\right]$ in $\binom{n}{k}$ ways. The remain elements $n-k$ belong to $B\cap\left[ n\right]$, so we can choose them in $\binom{n}{n-k}$ ways (see \cite{miki}). Since, the $k$ elements of $B\setminus\left[ n\right]$ must be already in $C$, then the remain elements $n-k$ can be choose from remain elements of the set $\left( \left[3n\right]\setminus\left[ n\right]\right)\setminus \left( B\setminus\left[ n\right]\right) $ in $\binom{2n-k}{n-k}$ ways. Since, $\vert B\Delta\left[ n\right]\vert=2k$, then we have $\binom{2k}{n}$ ways to choose the elements of the set $A$ (see \cite{miki}). We conclude that
\begin{equation}\label{2.1}
\vert Y\vert=\sum_{k=0}^{n}\binom{n}{k}\binom{n}{n-k}\binom{2k}{n}\binom{2n-k}{n-k}.
\end{equation}
Second Step. Observe that
\begin{align*}
Y=\left\lbrace \left(A,B,C\right)\vert A,B\subset\left[ 2n\right] ,C\subset\left[ 3n\right]\setminus\left[ n\right],\vert A\vert=\vert B\vert=\vert C\vert=n,A\setminus\left[ n\right]\subset B,\right.\\ \left.B\setminus\left[ n\right]\subset C,  A\cap B\cap C\cap\left[n \right]=\emptyset   \right\rbrace.
\end{align*}

Let us star by counting the number of elements of $A$. Suppose $k=\vert A\cap\left[n \right]  \vert$. Obviously, $0\leq k\leq n$, we choose those elements from the set $\left[n \right]$ in $\binom{n}{k}$ ways, the remain $n-k$ elements belong to $A\setminus\left[n \right]$ and we choose them from the set $\left[2n \right]\setminus\left[n \right]$ in $\binom{n}{n-k}$ ways. In addition, these elements are in the set $B$, so the remain $k$ elements can be choose from $\left[2n \right]\setminus A$ in $\binom{n}{k}$ ways (for more details see \cite{miki}). Clearly $A\setminus\left[ n\right]\subset B\setminus\left[ n\right]$, then the $n-k$ elements of $A\setminus\left[ n\right]$ must already be in $C$. The remain elements of $C$ can be choose from the set $\left[3n\right] \setminus\left[ 2n\right]$ in $\binom{n}{k}$ ways. We conclude that
\begin{equation}\label{2.2}
\vert Y\vert=\sum_{k=0}^{n}\binom{n}{k}\binom{n}{n-k}\binom{n}{k}\binom{n}{k}.
\end{equation} 
From Identity~\eqref{2.1} and Identity~\eqref{2.2}, the proof of~\eqref{1.7} is done.\\

\begin{remark}\rm
Motivated by J. Miki\'c’s observation about generalization of Identity~\eqref{1.5}, see \cite[Remark 1]{miki}, we generalize the set $Y$ as follow
\begin{align*}
Y:=\left\lbrace \left(A,B,C\right)\vert A,B\subset\left[ m\right] ,C\subset\left[ m+n\right]\setminus\left[ n\right],\vert A\vert=\vert B\vert=\vert C\vert=n\right.\\ \left.,A\subset B\Delta\left[ n\right],B\setminus\left[ n\right]\subset C  \right\rbrace ,
\end{align*}
where by double counting argument we get
\begin{align}
\sum_{k=0}^{\min(m-n,n)}\binom{m-n}{k}\binom{n}{n-k}\binom{2k}{n}\binom{m-k}{n-k}\\=\sum_{k=0}^{\min(m-n,n)}\binom{n}{k}\binom{m-n}{n-k}\binom{m-n}{k}\binom{n}{k}.\nonumber
\end{align}
\end{remark}
 
\section{Acknowledgment}
This paper is partially supported by DGRSDT grant $n^{\circ}$ C0656701.

\bibliographystyle{elsarticle-num} 


\end{document}